\newtheorem{theorem}{Theorem}
\newtheorem{lemma}[theorem]{Lemma}
\newtheorem{corollary}[theorem]{Corollary}
\newtheorem{example}[theorem]{Example}
\theoremstyle{remark}
\newtheorem{remark}[theorem]{Remark}
\numberwithin{equation}{section}
\begin{document}

\title[Smooth surjections and surjective restrictions]
{Smooth surjections and surjective restrictions}

\author[R. M. Aron]{Richard M. Aron }
\address{Richard M. Aron: Department of Mathematical Sciences, Kent State University, Kent, Ohio 44242, USA}
\email{aron@math.kent.edu}

\author[J. A. Jaramillo]{Jes\'us Angel Jaramillo}
\address{Jes\'us A. Jaramillo: Instituto de Matem\'atica Interdiscliplinar (IMI) and Departamento
de An\'alisis Matem\'atico, Universidad Complutense de Madrid, 28040-Madrid, Spain.}
\email{jaramil@mat.ucm.es}

\author[E. Le Donne]{Enrico Le Donne}
\address{Enrico Le Donne: Department of Mathematics and Statistics,
University of Jyv\"{a}skyl\"{a}, Jyv\"{a}skyl\"{a} 40014, Finland.}
\email{ledonne@msri.org}

\thanks{The research of R. M. Aron was supported in part by MINECO (Spain), project MTM2014-57838-C2-2-P. The
research of J. A. Jaramillo was supported in part by MINECO (Spain), project MTM2012-34341. The research of E. Le Donne
was supported by the Academy of Finland, project no. 288501.}

%\date{July 19, 2016}

\maketitle

\begin{abstract}
Given a surjective mapping $f : E \to F$  between Banach spaces, we investigate the existence of a subspace $G$ of $E$, with the same density character as $F$, such that the restriction of $f$ to $G$ remains surjective. We obtain a positive answer whenever $f$ is continuous and uniformly open. In the smooth case, we deduce a positive answer when $f$ is a $C^1$-smooth surjection whose set of critical values is countable. Finally we show that, when $f$ takes values in the Euclidean space $\mathbb R^n$, in order to obtain this result it is not sufficient to assume that the set of critical values of $f$ has zero-measure.
\end{abstract}

\begin{section}{Introduction}

In the geometric nonlinear theory of Banach spaces, the study of smooth surjections plays a relevant role. We refer to the book by Benyamini and Lindenstrauss \cite{BL} for extensive information about this subject. One initial
question is about the existence of such smooth surjections. In this direction, it was proved by Bates \cite{Ba2} that
every infinite-dimensional Banach $E$ space admits a $C^1$-smooth mapping $f : E \to F$ onto any separable Banach space $F$. If we look for a  higher degree of differentiability, the situation changes. For example, it is also
proved in \cite{Ba2}  that, if $E$ is superreflexive and ${\rm dens}(E) \geq {\rm dens}(F)$, there exists in fact a $C^{\infty}$-smooth surjection $f : E \to F$. (Here, the {\it density character} of a metric space $X$, denoted by  ${\rm dens}(X)$, is defined as usual as the smallest cardinality of a dense subset of $X$.) Nevertheless, H\'ajek proved in \cite{Ha} that, if $K$ is a countable compact space, there is no $C^2$-smooth surjection from $C(K)$ onto any Banach space with non-trivial type.

\

Among the many important, fundamental results in the theory of linear
operators are so-called {\em selection theorems}. One such is the
Michael selection theorem (see also related work by Bartle and Graves in \cite{BG} or \cite{BL}). One
version of this theorem states that if $T:E \to F$ is a continuous   {\em linear   surjection} between Banach, or
even Fr\'echet, spaces, then there is a continuous (not necessarily linear) mapping $g:F \to E$ such that
$T(g(y)) = y$ for all $y \in F.$ One consequence of this is that if $F$ is, say separable (i.e.
$F$ has a countable dense set), then one can find a separable subspace $G \subset E$ such that $f|_G$
is also surjective. In other words, we can find a (possibly much smaller subspace) of
$E$  having the same density character as  $F$ on which the restriction of $T$ remains an onto mapping.
In this paper, we continue an investigation begun in \cite{AJR} about when this occurs if
the linear surjection $T$ is replaced by a ``good" non-linear one. We will
discuss this question in greater generality, replacing $T$ by an appropriate type of
continuous surjection $f:E \to F$ and relating the existence of such a $G$ to properties of the set of critical values of $f.$
Given a surjection $f:E\to F$ between Banach spaces, where $E$ has larger density character than $F$, we say that $f$ is {\it density-surjective} if there is a closed subspace $G \subset E$ with ${\rm dens}(G) = {\rm dens}(F)$, such that the restriction $f|_G$ remains surjective. Then we wonder which conditions ensure that a smooth surjection is density-surjective.  As we have seen, from the Bartle-Graves selection theorem we have always a positive answer in the linear case. On the other hand, in the general nonlinear case, the answer is negative. Indeed, as follows from \cite{AJR}, for the Hilbert space $E= \ell_2(\Gamma)$, where ${\rm card}(\Gamma) \geq 2^{\aleph_0}$ and for $F = \mathbb R^2$, it is possible to construct a  $C^{\infty}$-smooth surjection $f : E \to F$ such that, for every separable subspace $G$ of $E$, the restriction $f|_G$  is no longer surjective. Furthermore, $f$ is such that ${\rm rank} D f (x)\leq 1$ for every $x\in E$. In fact, the same construction can be carried out for all Banach spaces $E$ which admit a fundamental biorthogonal system with cardinality $\geq 2^{\aleph_0}$ or, more generally, for all Banach space with $C^{\infty}$-cellularity $\geq 2^{\aleph_0}$ (we refer to \cite{AJR} for unexplained terms and for details). The condition about the rank of the mapping $f$ shows the strong failure of the classical Morse-Sard theorem in this infinite-dimensional context (see also \cite{Ba1} for examples in the separable case). Recall that, given a differentiable mapping $f:E\to F$ between Banach spaces, a point $x\in E$ is said to be a {\it regular point} of $f$ if the differential $Df(x): E \to  F$ is surjective. Otherwise we say that $x$ is a {\it critical point} of $f$. A point $y\in F$ is said to be a {\it regular value} of $f$ if its preimage $f^{-1}(y)$ contains no critical points. Otherwise we say that $y$ is a {\it critical value} of $f$. The mapping $f$ is said to be {\it regular} if every value is regular. Thus our mapping $f : \ell_2(\Gamma) \to \mathbb R^2$ is such that every point of $\mathbb R^2$ is a critical value of $f$. This is in contrast with the Morse-Sard theorem, according to which for every $C^k$-smooth mapping $f : \mathbb R^m \to \mathbb R^n$, where $k> \max\{ m-n, 0\}$, the set of critical values of $f$ has zero-measure in $\mathbb R^n$. We refer to \cite{Bo} and \cite{Ku} for classical examples of the failure of the Morse-Sard theorem in infinite-dimensional spaces. As we are going to see, our positive results in this paper will be related to regularity and openness properties of the mapping $f$.

\

The contents of the paper are as follows. In Section 2 we consider the analogous problem in a metric setting. Here we say that a surjection $f:X\to Y$ between metric spaces is {\it density-surjective} if there is a subset $Z \subset X$ with ${\rm dens}(Z) = {\rm dens}(Y)$, such that the restriction $f|_Z$ remains surjective. It is easy to see that, if $X$ and $Y$ are Banach spaces, we recover our previous definition.  Thus we obtain in Theorem \ref{main1} that, if $X$ is complete, every uniformly open surjection $f:X\to Y$ is density-surjective.  In Section 3 we apply this result to the case of smooth surjections between Banach spaces. Using a classical result of Graves (see \cite{Do} and \cite{Gr}) we obtain in Corollary \ref{exceptional} that every  $C^1$-smooth surjection between Banach spaces with a countable number of critical values is density-surjective. For smooth surjections taking values in the Euclidean space $\mathbb R^n$, it is natural to ask if we can obtain an analogous result when the set of critical values has zero-measure in $\mathbb R^n$. A negative answer to this question is given in Theorem \ref{example}. Here we prove that, for the space $\ell_2(\Gamma)$ where ${\rm card}(\Gamma) = 2^{\aleph_0}$, there exists a $C^{\infty}$-smooth surjection $f: \ell_2(\Gamma) \to \mathbb R^2$ whose set of critical values has zero-measure in $\mathbb R^2$ and such that, for every separable subspace $G$ of $\ell_2(\Gamma)$, the restriction $f|_G$  is no longer surjective.

\end{section}

\begin{section}{The metric setting}

In this section we will consider our problem in the context of metric spaces, obtaining a positive result in terms of the following openness condition. We say that a mapping $f:X \to Y$ between metric spaces is {\it uniformly open} if, for every $\varepsilon >0$ there exists some $\delta >0$ so that $B(f(x), \delta) \subset f (B(x, \varepsilon))$, for every $x\in X$. These mappings are also called {\it co-uniformly continuous} in the literature (see e. g. \cite{BL}). A remarkable class of uniformly open mappings are the so-called {\it Lipschitz quotients}, considered for instance in \cite{BJLPS} and \cite{BL}.  Our main result in this section is the following.

\begin{theorem}\label{main1}
Let  $f:X \to Y$ be a continuous, uniformly open surjection between metric spaces, where $X$ is complete.
Then $f$ is density-surjective.
\end{theorem}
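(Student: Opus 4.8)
The plan is to build, by an iterative approximation scheme driven by the uniform openness modulus, a closed subset $Z \subseteq X$ of the correct size whose image is all of $Y$. Write $\kappa = {\rm dens}(Y)$ and assume $\kappa$ is infinite (the finite case being trivial). I would fix a summable sequence $\varepsilon_n \downarrow 0$, say $\varepsilon_n = 2^{-n}$, and for each $n$ use uniform openness to pick $\delta_n > 0$ with $B(f(x),\delta_n) \subset f(B(x,\varepsilon_n))$ for all $x \in X$; shrinking $\delta_n$ if necessary we may assume $\delta_n \to 0$. The key observation I will exploit is that uniform openness yields \emph{exact} local preimages: if $d(f(x),w) < \delta_n$ then $w \in B(f(x),\delta_n) \subset f(B(x,\varepsilon_n))$, so $w$ has a preimage within distance $\varepsilon_n$ of $x$.

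Next I would construct an increasing chain $Z_0 \subseteq Z_1 \subseteq \cdots$ of subsets of $X$, each of cardinality $\le \kappa$, maintaining the invariant that $f(Z_n)$ is $\delta_{n+1}$-dense in $Y$. For $Z_0$ I take preimages of a $\delta_1$-dense subset of $Y$ of cardinality $\le \kappa$ (such a set exists since ${\rm dens}(Y) = \kappa$). Given $Z_n$, for each $z \in Z_n$ I choose a $\delta_{n+2}$-dense subset $N_z$ of the ball $B(f(z),\delta_{n+1})$ with $|N_z| \le \kappa$, and for each $w \in N_z$ an exact preimage $x_{z,w} \in B(z,\varepsilon_{n+1})$ with $f(x_{z,w}) = w$ (available by the observation above, since $w \in B(f(z),\delta_{n+1}) \subset f(B(z,\varepsilon_{n+1}))$). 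Adjoining all these points to $Z_n$ gives $Z_{n+1}$; a routine cardinal computation gives $|Z_{n+1}| \le \kappa \cdot \kappa = \kappa$, and the density invariant propagates, because any $y \in Y$ lies within $\delta_{n+1}$ of some $f(z)$ with $z \in Z_n$, hence within $\delta_{n+2}$ of some $w \in N_z \subseteq f(Z_{n+1})$.

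The heart of the argument is then to extract, for each fixed $y \in Y$, a Cauchy sequence threading through these sets. Starting from $z_0 \in Z_0$ with $d(f(z_0),y) < \delta_1$, the invariant and the construction let me pass from any $z_n \in Z_n$ with $d(f(z_n),y) < \delta_{n+1}$ to $z_{n+1} = x_{z_n,w} \in Z_{n+1}$ with $d(z_{n+1},z_n) < \varepsilon_{n+1}$ and $d(f(z_{n+1}),y) < \delta_{n+2}$. Since $\sum_n \varepsilon_n < \infty$ the sequence $(z_n)$ is Cauchy, so by completeness of $X$ it converges to some $z \in Z := \overline{\bigcup_n Z_n}$; as $d(f(z_n),y) < \delta_{n+1} \to 0$, continuity of $f$ forces $f(z) = y$. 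Thus $f(Z) = Y$.

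Finally, I would record that $Z$ has the right density character: $\bigcup_n Z_n$ has cardinality $\le \aleph_0 \cdot \kappa = \kappa$, and passing to the closure does not increase the density character of a metric space, so ${\rm dens}(Z) \le \kappa$; conversely, since $f$ restricts to a continuous surjection of $Z$ onto $Y$ we get ${\rm dens}(Y) \le {\rm dens}(Z)$, whence equality. This exhibits $Z$ as the required subspace and shows that $f$ is density-surjective. The main obstacle, and the point the construction is designed to overcome, is that there may be many more than $\kappa$ points $y \in Y$, so one cannot simply collect a preimage of each $y$ and hope to keep the size controlled; the device of storing only the preimages of the countably many nets $N_z$ and recovering the exact preimage of an arbitrary $y$ as a limit, using completeness, is precisely what keeps ${\rm dens}(Z)$ equal to ${\rm dens}(Y)$.
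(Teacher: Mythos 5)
Your proof is correct, and while it shares its analytic core with the paper's argument --- iterate uniform openness to thread, toward any target $y \in Y$, a sequence of exact preimages with $d(z_{n+1},z_n)<\varepsilon_{n+1}$, then invoke completeness of $X$ and continuity of $f$ to land on a preimage of $y$ inside the closure --- the combinatorial organization is genuinely different. The paper enumerates a dense set $D \subseteq Y$ along the first ordinal $\Gamma$ of cardinality $\kappa = {\rm dens}(Y)$, with each element repeated cofinally, fixes global selection maps $\sigma_j : X \times Y \to X$, and runs a \emph{transfinite} recursion adjoining at stage $\beta$ the points $\sigma_j(p,q(\beta))$ for all previously constructed $p$ and all $j\in \mathbb N$; the cofinal repetition is precisely what guarantees that, when chasing a given $y$, approximants $q_{\gamma_k}$ can be chosen with $\gamma_k$ strictly increasing, so that the needed preimage of $q_{\gamma_k}$ through $p_{k-1}$ really was added at stage $\gamma_k$. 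You replace this by an $\omega$-indexed recursion in which every stage-$n$ point $z$ receives its own $\delta_{n+2}$-net $N_z$ of size $\le \kappa$ inside $B(f(z),\delta_{n+1})$ together with exact preimages at distance $<\varepsilon_{n+1}$, and you carry the invariant that $f(Z_n)$ is $\delta_{n+1}$-dense in $Y$. This dispenses with ordinals and the cofinality device altogether, at the modest price of invoking hereditary density of metric spaces (to get $\kappa$-sized nets inside balls --- the standard fact the paper records separately as a lemma) and the identities $\kappa\cdot\kappa=\kappa$ and $\aleph_0\cdot\kappa=\kappa$; notably, like the paper's main proof and unlike the simplified argument sketched in the paper's remark, you avoid any hypothesis of the form $\kappa^{\aleph_0}=\kappa$, because the admissible continuations of each point are recorded explicitly at each stage rather than obtained by quantifying over all convergent sequences in $D$. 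Only cosmetic repairs are needed: the nets should carry a stage index, say $N_z^{(n)}$, since a point $z \in Z_m \subseteq Z_n$ acquires a new net at every later stage and your extraction step for $z_n$ uses the stage-$n$ net; and you should state explicitly that shrinking the $\delta_n$ to force $\delta_n \to 0$ preserves the inclusion $B(f(x),\delta_n)\subset f(B(x,\varepsilon_n))$. Neither point affects correctness.
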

\begin{proof}
For each integer $j \geq 0$,  using uniform openness, we can choose some $\delta_j>0$ such that
$$
B(f(x), \delta_j) \subset f (B(x, 2^{-j})), \text{ for every  } x\in X.
$$
 We may also assume that the sequence $(\delta_j)$ is strictly decreasing and $0< \delta_j < 2^{-j}$ for every $j\geq 0$.
Using this, and taking into account that $f$ is surjective, for each $p\in X$, $q\in Y$, and $j\geq 0$, we can choose a
preimage  of $q$ by $f$,  denoted $\sigma_j (p, q) \in X$, with the additional condition that, if $q$ belongs to the ball $B(f(p), \delta_j)$, then $\sigma_j (p, q)$ is in the ball $B(p, 2^{-j})$. In this way we define, for each $j \geq 0$, a map
$\sigma_j : X \times Y \to X$ such that
\begin{enumerate}
\item $f(\sigma_j (p, q)) = q$ for all $p \in X$ and $q \in Y$, and
\item If $d_{Y}(q, f(p)) < \delta_j$, then $d_{X} (\sigma_j(p, q), p) < \frac{1}{2^j}$.
\end{enumerate}
Here we denote by $d_X$ and $d_Y$ the corresponding distances in $X$ and $Y$, respectively.

\

 Now let $\Gamma$ be the first ordinal with the same cardinality as ${\rm dens}(Y)$, and consider a dense set $D$ in $Y$
of this cardinality. We may assume to be in the nontrivial case where $\Gamma$ is not finite, in which case $\Gamma \times \Gamma $
has the same cardinality as $\Gamma$. Using this, and repeating $\Gamma$-times each element of the set $D$, we can define a mapping
$q: \Gamma \to Y$ with the property that the image $q(\Gamma)=D$ is dense in $Y$ and  such that for every $y\in q(\Gamma)$ the preimage $q^{-1}(y)$
has cardinality $\Gamma$, so in particular $q^{-1}(y)$ is cofinal in $\Gamma$. By transfinite induction we are going to select for each ${\gamma< \Gamma}$ some special points in  $f^{-1}(q(\gamma))$. To begin, define $P_0= \{p_0\}$ for a choice of $p_0$ such that $f(p_0)=q(0)$.

Now fix an ordinal $\beta < \Gamma$, with $\beta>0$, and suppose that the subsets $P_{\gamma}$ of $X$  have been defined for each $\gamma < \beta$. Then define the set
$$
P_{\beta} = \left(\bigcup_{\gamma < \beta} P_{\gamma} \right) \bigcup \left\{ \sigma_j (p, q(\beta)) \, : \, p \in \bigcup_{\gamma < \beta} P_{\gamma}; \, j\in \mathbb N\right\}.
$$
In this way we obtain an increasing family $\{P_{\gamma} \}_{ \gamma< \Gamma}$ of subsets of $X$, each with
cardinality not larger than $\Gamma$. Then the cardinality of the set $P= \cup_{ \gamma< \Gamma} P_{\gamma}$ is again not larger than $\Gamma$. Now define $Z=\overline{P}$ to be the closure of $P$ in $X$. It is then clear that ${\rm dens}(Z)\leq {\rm dens}(Y)$. We are going to see that $f(Z)=Y$.

\

 Fix $y\in Y$. Using the density of $D=\{q(\gamma)\}_{\gamma< \Gamma}$, for each $k\in \mathbb N$ we can select
$\gamma_k <\Gamma$ such that the point $q_{\gamma_k} := q(\gamma_k)$ belongs to the ball
$B(y, \frac{1}{2} \delta_k)$. Furthermore, since $q^{-1}(q(\gamma_k))$  is cofinal in $\Gamma$,
we can also assume that $\gamma_k < \gamma_l$ whenever $k<l$.

Now consider $p_0\in X$ selected before; and for every $k\geq 1$ inductively  define
$$
p_{k}= \sigma_{k-1} (p_{k-1}, q_{\gamma_k}).
$$
On the one hand, taking into account  the definition of $P_{\gamma_k}$
and the fact that $\gamma_{k-1} < \gamma_{k}$, one can verify by induction  that $p_k\in P_{\gamma_k}$ for every $k$.
In particular, since the family $(P_\gamma)$ is increasing, we have that the sequence $(p_k)$ is contained in the set $P$. On the other hand,
for every  $k\geq 1$,  from how $ p_k$, $\sigma_{k-1}$, and $q_{\gamma_k}$ have been defined, we have that
\begin{eqnarray*}
d_Y (q_{\gamma_{k+1}}, f(p_k)) &=& d_Y (q_{\gamma_{k+1}}, f(\sigma_{k-1} (p_{k-1}, q_{\gamma_k})))\\
&=& d_Y (q_{\gamma_{k+1}}, q_{\gamma_k}) \\
&\leq& d_Y (q_{\gamma_{k+1}}, y)  +  d_Y (y, q_{\gamma_k})\\
&\leq&\frac{\delta_{k+1}}{2}+ \frac{\delta_k}{2}< \delta_k.
\end{eqnarray*}
 Because of the second property of the definition of
  $\sigma_{k-1}$, we deduce that
 $$
d_X (p_k,p_{k+1}  ) =
d_X (p_k, \sigma_k (p_k, q_{\gamma_{k+1}})) < \frac{1}{2^k}.
$$
We conclude that   $(p_k)$ is a Cauchy sequence in $P$, and by completeness of $X$ it  converges to some  $p\in \overline{P}= Z$. By continuity, the sequence $(f(p_k))$ converges to $f(p)$. But we have that $f(p_k) = f( \sigma_{k-1}(p_{k-1}, q_{\gamma_k})) = q_{\gamma_k}$ which converges  to $y$. In this way we obtain that $y=f(p)$.

\

Then we have obtained that $f|_Z: Z \to Y$ is a continuous surjection and ${\rm dens}(Z) \leq {\rm dens}(Y)$, so we have in fact that  ${\rm dens}(Z) = {\rm dens}(Y)$.

\end{proof}

\begin{remark} The above result is trivial when ${\rm card}(Y)={\rm dens}(Y)$. In fact, in this case every surjection $f: X \to Y$ is density-surjective. Indeed, we only need to choose a preimage in $X$ for each point of $Y$. An example of this situation is the Banach space $Y= \ell_{\infty}$, where we have that ${\rm card}(\ell_{\infty})={\rm dens}(\ell_{\infty})= 2^{\aleph_0}$. On the other hand, for certain nonseparable spaces, the proof of Theorem \ref{main1} can be simplified. More precisely, suppose that $X$ is a complete metric space and $Y$ is a metric space whose density character $\alpha ={\rm dens}(Y)$ satisfies $\alpha^{\aleph_0} = \alpha$. Consider a dense subset $D$ of $Y$ with cardinality $\alpha$. Then the set ${\mathcal S}(D)$ of all sequences in $D$ which are convergent in $Y$ has cardinality $\alpha^{\aleph_0} = \alpha$. Now fix a point $p_0\in X$. For each sequence $s=(q_k)_{k\in \mathbb N} \in {\mathcal S}(D)$ we can define the associated sequence $p^s=(p^s_k)_{k\in \mathbb N}$ in $X$ given by $ p^s_{k}= \sigma_{k-1} (p^s_{k-1}, q_k)$, for every $k\geq 1$. Proceeding as in the proof of Theorem \ref{main1} we obtain that the sequence $p^s$ is convergent in $X$. Furthermore, the set $Z$ of limits of all sequences $p^s$, where $s \in {\mathcal S}(D)$, satisfies that ${\rm dens}(Z)=\alpha ={\rm dens}(Y)$ and $f|_Z: Z \to Y$ is surjective. Nevertheless,  the above remarks cannot be applied to every nonseparable space. For instance, it is well-known that, as a consequence of K{\"o}nig's Theorem, $\aleph_{\omega}^{\aleph_0} > \aleph_{\omega}$ (see e.g. \cite{Hau}, page 40 or \cite{Kun}, page 34). Now choose a set $\Gamma$ with ${\rm card}(\Gamma)= \aleph_{\omega}$ and consider the Hilbert space $Y=\ell_2(\Gamma)$. It is easily seen that in this case ${\rm dens}(Y)= \aleph_{\omega}$ and ${\rm card}(Y)=\aleph_{\omega}^{\aleph_0}$.
\end{remark}

\end{section}

\begin{section}{Regular mappings between Banach spaces}

Recall that a differentiable mapping $f: E \to F$ between Banach spaces is said to be {\it regular at point} $x_0\in E$ if the differential $Df(x_0): E \to F$ is onto. If this holds for every point of $E$, we say simply that $f$ is {\it regular}. The connection between regularity and openness is given by the following classical result due to Graves (see \cite{Gr} or \cite{Do}).

\begin{theorem} {\rm (Graves)}\label{Graves}
Let $f: E \to F$ be a $C^1$-smooth mapping between Banach spaces, and suppose that $f$ is regular at a point $x_0\in E$. Then there exist a neighborhood $U$ of $x_0$ and a constant $c>0$ such that for every $x\in U$ and every $\tau >0$ with $B(x, \tau)\subset U$, we have that
$$
B(f(x), c \tau)\subset f( B(x, \tau)).
$$
\end{theorem}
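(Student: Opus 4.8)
The plan is to reduce the statement to a quantitative surjectivity estimate for the linear map $T := Df(x_0)$ and then transfer it to $f$ by a Newton-type iteration. First I would invoke the open mapping theorem: since $T : E \to F$ is a bounded surjective linear operator between Banach spaces, there is a constant $m>0$ such that for every $w \in F$ one can choose $u \in E$ with $Tu = w$ and $\|u\| \le m\|w\|$. This is the only place where the regularity hypothesis (surjectivity of $Df(x_0)$) enters, and it supplies an approximate right inverse of $T$ with controlled norm. Next, using that $f$ is $C^1$, hence $Df$ is continuous at $x_0$, I would fix the neighborhood $U=B(x_0,r)$ small enough that $\|Df(z)-T\|\le \frac{1}{2m}$ for all $z\in U$, and claim the conclusion holds with $c=\frac{1}{2m}$.

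Fix $x\in U$ and $\tau>0$ with $B(x,\tau)\subset U$, and let $y\in B(f(x),c\tau)$, so $\|y-f(x)\|<c\tau$; the goal is to produce $z\in B(x,\tau)$ with $f(z)=y$. The heart of the argument is an iteration started at $\xi_0=x$: given $\xi_n$, I use the linear solve to pick $u_n$ with $Tu_n=y-f(\xi_n)$ and $\|u_n\|\le m\|y-f(\xi_n)\|$, and set $\xi_{n+1}=\xi_n+u_n$. Writing the increment through the fundamental theorem of calculus for $C^1$ Banach-valued maps gives the residual identity
$$
y-f(\xi_{n+1}) = Tu_n - \int_0^1 Df(\xi_n+tu_n)\,u_n\,dt = \int_0^1 \bigl(T-Df(\xi_n+tu_n)\bigr)u_n\,dt,
$$
so that, as long as the segment $[\xi_n,\xi_{n+1}]$ lies in $U$, the derivative bound yields $\|y-f(\xi_{n+1})\|\le \frac{1}{2m}\|u_n\|\le \frac{1}{2}\|y-f(\xi_n)\|$. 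Iterating this gives the geometric decay $\|y-f(\xi_n)\|\le 2^{-n}\|y-f(x)\|$ and hence $\|u_n\|\le m\,2^{-n}\|y-f(x)\|$.

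Summing the displacements, $\|\xi_n-x\|\le \sum_{k<n}\|u_k\|\le 2m\|y-f(x)\|<2m\cdot c\tau=\tau$, which keeps every iterate inside $B(x,\tau)$; since $U$ is convex, the segments used above also remain in $U$, closing the induction. The same summable bound shows $(\xi_n)$ is Cauchy, so by completeness of $E$ it converges to some $z$ with $\|z-x\|\le 2m\|y-f(x)\|<\tau$, i.e. $z\in B(x,\tau)$; continuity of $f$ together with $f(\xi_n)\to y$ then forces $f(z)=y$, establishing $B(f(x),c\tau)\subset f(B(x,\tau))$.

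The main obstacle I anticipate is the simultaneous control required to run the induction cleanly: I must guarantee a priori that all iterates, and all the segments joining consecutive iterates, stay inside $U$ so that the uniform bound $\|Df-T\|\le \frac{1}{2m}$ applies at every step, while at the same time proving the geometric decay that provides exactly that containment. The choice $c=\frac{1}{2m}$, which makes the total displacement $2m\|y-f(x)\|$ strictly smaller than $\tau$, resolves this apparent circularity, with convexity of the ball $U$ used to pass from ``endpoints in $B(x,\tau)$'' to ``segment in $U$''.
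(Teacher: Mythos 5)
Your proof is correct. Note that the paper does not prove Theorem \ref{Graves} at all: it is quoted as a classical result, with the proof deferred to the cited references of Graves \cite{Gr} and Dontchev \cite{Do}. Your argument --- extracting a bounded ``right-inverse modulus'' $m$ for $T=Df(x_0)$ from the open mapping theorem, shrinking to a ball $U$ on which $\|Df(z)-T\|\le \frac{1}{2m}$, and then running a Newton-type iteration whose residuals halve at each step so that the total displacement is at most $2m\|y-f(x)\|<\tau$ --- is precisely the classical Lyusternik--Graves successive-approximation proof, essentially Graves' original argument in the form presented by Dontchev. All the steps check out: the residual identity via the fundamental theorem of calculus is valid for $C^1$ maps on Banach spaces (the integral formula holds on all of $E$; the hypothesis that the segment lies in $U$ is needed only for the norm bound $\|T-Df\|\le\frac{1}{2m}$), the simultaneous induction you flag as the main obstacle is genuinely closed by the choice $c=\frac{1}{2m}$ (each $\xi_n$ lands in $B(x,\tau)$, and convexity of the ball keeps the connecting segments there), and the strict inequality $\|y-f(x)\|<c\tau$ correctly places the limit $z$ in the \emph{open} ball $B(x,\tau)$. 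One small remark: the $C^1$ hypothesis is used only to get strict differentiability at $x_0$ (i.e., $\|f(u)-f(v)-T(u-v)\|\le\frac{1}{2m}\|u-v\|$ for $u,v$ near $x_0$), which is the form in which \cite{Do} states the theorem; your proof would go through verbatim under that weaker assumption.
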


Using Theorem \ref{Graves} we obtain our next corollary.

\begin{corollary}\label{exceptional}
Let $f: E \to F$ be a $C^1$-smooth surjection between Banach spaces.  If the set of critical values of $f$ has cardinality $\leq {\rm dens}(F)$, then $f$ is density-surjective. In particular this applies when the set of critical values of $f$ is countable.
\end{corollary}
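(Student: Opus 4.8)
The plan is to re-run the transfinite construction of Theorem~\ref{main1}, using Graves' theorem (Theorem~\ref{Graves}) in place of the global uniform openness and treating the critical values separately by hand. Throughout write $\alpha = {\rm dens}(F)$. First I would dispose of the easy case: if ${\rm card}(F)=\alpha$, the Remark following Theorem~\ref{main1} already gives density-surjectivity, so I assume ${\rm card}(F)>\alpha$. Let $S\subseteq F$ be the set of critical values and $R=F\setminus S$ the regular values. Since ${\rm card}(S)\leq\alpha<{\rm card}(F)$ and every ball in $F$ has cardinality ${\rm card}(F)$, the set $R$ is dense in $F$. For each $s\in S$ I pick one preimage $x_s\in f^{-1}(s)$ and set $A_0=\{x_s:s\in S\}$, of cardinality $\leq\alpha$; this handles the critical values. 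I also fix a set $D\subseteq R$, dense in $F$, with ${\rm card}(D)\leq\alpha$ (possible since ${\rm dens}(R)\leq{\rm dens}(F)=\alpha$).

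The key point is a \emph{local} substitute for the global selections $\sigma_j$ of Theorem~\ref{main1}. Every point of $\Omega:=f^{-1}(R)$ is a regular point, so Theorem~\ref{Graves} applies there, but with a neighborhood and constant $c$ that depend on the point. I would therefore stratify: for $n,m\in\mathbb N$ let $A_{n,m}$ be the set of regular points $x_0$ for which $B(x_0,1/n)$ serves as a Graves neighborhood with constant $c=1/m$, i.e. $B(f(x),\tau/m)\subseteq f(B(x,\tau))$ whenever $x\in B(x_0,1/n)$ and $B(x,\tau)\subseteq B(x_0,1/n)$. Then $\Omega=\bigcup_{n,m}A_{n,m}$, one has $A_{n,m}\subseteq A_{n',m}$ for $n'\geq n$, and on each $A_{n,m}$ the selections become genuinely uniform: exactly as in Theorem~\ref{main1} one constructs maps $\sigma^{n,m}_j$ with $f(\sigma^{n,m}_j(p,q))=q$ and $d_E(\sigma^{n,m}_j(p,q),p)<2^{-j}$ as soon as $p\in A_{n,m}$ and $d_F(q,f(p))$ is small enough (depending on $n,m,j$).

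Now I would carry out the transfinite induction of Theorem~\ref{main1} essentially verbatim, with the targets $q(\beta)$ drawn from $D$, but defining $P_\beta$ so as to adjoin, for every earlier point $p$ and every triple $(n,m,j)$ with $p\in A_{n,m}$, the point $\sigma^{n,m}_j(p,q(\beta))$, together with one fixed global preimage of $q(\beta)$ (which is regular, as $q(\beta)\in R$). Only countably many selections are added at each step, so the resulting set $P$ still has cardinality $\leq\alpha$, and I take $G=\overline{\operatorname{span}}(P\cup A_0)$, a closed subspace with ${\rm dens}(G)\leq\alpha$. For surjectivity of $f|_G$: given $y\in S$ use $x_y\in A_0\subseteq G$; given $y\in R$, choose $q_{\gamma_k}\in D$ with $q_{\gamma_k}\to y$ and (by cofinality) $\gamma_k$ strictly increasing, start from a fixed preimage $p_1\in P$ of $q_{\gamma_1}$, which being regular lies in some $A_{n_1,m_1}$, and build $p_{k+1}=\sigma^{N,m_1}_{j_k}(p_k,q_{\gamma_{k+1}})$ with $N$ a fixed multiple of $n_1$. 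Each such $p_{k+1}$ lies in $P$ by the induction step, and its limit $p\in\overline{P}\subseteq G$ satisfies $f(p)=\lim f(p_k)=\lim q_{\gamma_k}=y$.

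The main obstacle — absent in Theorem~\ref{main1}, where the openness constant was uniform — is to keep the chain $(p_k)$ inside a single stratum so that the Graves constants do not degenerate along the way. This is where I expect the real work: by choosing $q_{\gamma_k}\to y$ fast enough, the total displacement $\sum_k d_E(p_k,p_{k+1})$ can be forced below the radius $1/N$, which simultaneously keeps every $p_k$ in $A_{N,m_1}$ (using the inclusion $A_{n_1,m_1}\subseteq A_{N,m_1}$ and the elementary estimate that a point within $1/(2N)$ of a center of $A_{N,m_1}$ still carries a Graves neighborhood with constant $1/m_1$ on a slightly smaller ball) and makes $(p_k)$ Cauchy, hence convergent by completeness of $E$. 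Once this is in place, $f(G)=F$, and since a continuous surjection cannot increase density character, ${\rm dens}(G)\geq{\rm dens}(f(G))={\rm dens}(F)=\alpha$, giving ${\rm dens}(G)=\alpha$ and proving that $f$ is density-surjective. The "in particular'' clause is immediate, since a countable $S$ satisfies ${\rm card}(S)\leq\aleph_0\leq{\rm dens}(F)$.
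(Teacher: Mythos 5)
Your overall strategy---stratify the regular points by quantitative Graves data, handle the critical values by a small set of hand-picked preimages, and rerun the transfinite machinery of Theorem~\ref{main1}---is close in spirit to the paper's proof, which introduces the sets $E_{m,k}=\{x\in E : B(f(x),\tau/m)\subset f(B(x,\tau)) \text{ for all } 0<\tau<1/k\}$, takes closures $X_{m,k}=\overline{E_{m,k}}$, shows each restriction $f|_{X_{m,k}}\colon X_{m,k}\to Y_{m,k}=f(X_{m,k})$ is uniformly open, applies Theorem~\ref{main1} \emph{separately to each piece} to get sets $Z_{m,k}$, and finally takes the closed span of $H\cup\bigcup_{m,k}Z_{m,k}$. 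Your choice of targets $D\subseteq R$ (so that every selected preimage is automatically a regular point) is a nice touch the paper does not need, and your robustness estimate for recentering Graves balls is correct. But your single global induction has a genuine gap at the anchoring step, and the paper's piece-by-piece architecture is precisely what avoids it.

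The gap is a circularity in the constants at the first step. Given $y\in R$, you choose $q_{\gamma_1}\in D$ and anchor at the fixed preimage $p_1$ of $q_{\gamma_1}$; but the stratum data $(n_1,m_1)$, hence $N$ and every threshold $\delta_{N,m_1,j}$, are determined only \emph{after} $q_{\gamma_1}$ is committed. The very first controlled move $p_2=\sigma^{N,m_1}_{j_1}(p_1,q_{\gamma_2})$ requires $d(q_{\gamma_2},q_{\gamma_1})\leq d(q_{\gamma_2},y)+d(y,q_{\gamma_1})<\delta_{N,m_1,j_1}$, and nothing prevents the fixed preimage of $q_{\gamma_1}$ from having arbitrarily bad Graves data, so that $d(y,q_{\gamma_1})$ already exceeds the threshold; re-choosing $q_{\gamma_1}$ closer to $y$ changes $p_1$ and can make the constants worse still, and walking the targets slowly from $q_{\gamma_1}$ to $y$ costs total $E$-displacement of order $m_1\, d(q_{\gamma_1},y)$, which may exceed the confinement radius $1/(2N)$. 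Your ``fast enough'' clause governs steps $k\geq 2$ only. Contrast this with Theorem~\ref{main1}, where the first selection $\sigma_0(p_0,q_{\gamma_1})$ is allowed to be an uncontrolled jump exactly because \emph{every} point of $X$ carries the same constants, so the chain self-corrects from $p_1$ onward; in your nonuniform setting the landing point's constants are uncontrolled. The gap is repairable, but the repair is essentially the paper's idea: adjoin, for each $d\in D$ and each pair $(n,m)$ with $f^{-1}(d)\cap A_{n,m}\neq\emptyset$, one preimage of $d$ lying in $A_{n,m}$; then for $y\in R$ with a preimage in $A_{n_y,m_y}$, your own recentering estimate shows nearby targets have preimages in $A_{2n_y,m_y}$, so one can anchor in a stratum whose constants are known \emph{before} choosing $q_{\gamma_1}$ --- in effect running the argument stratum by stratum, which is what the paper does by invoking Theorem~\ref{main1} on each $X_{m,k}$ and covering $F=C\cup\bigcup_{m,k}Y_{m,k}$.
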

\begin{proof}
Let $E_0$ denote the set of regular points of $f$. Then $C=f(E \setminus E_0)$ is the set of critical values of $f$. By  hypothesis  ${\rm card}(C)\leq {\rm dens}(F)$, so there is a  subset $H\subset E \setminus E_0$ with ${\rm card}(H) \leq {\rm dens}(F)$ and such that $f(H) = C$.

Now for each $m , k \in \mathbb N$, consider the set
$$
E_{m, k} = \left\{ x\in E \, : \, B\left(f(x), \frac{\tau}{m} \right) \subset f \left( B(x, \tau) \right), \, \text{  for every  } \tau \in \left(0, \frac{1}{k}\right) \right\}.
$$
Define $X_{m, k}= \overline{E_{m, k}}$ to be the closure of $E_{m, k}$ in $E$ and $Y_{m, k}= f( X_{m, k})$.

Note that from Graves' Theorem we have that
$$
E_0 \subset \bigcup_{m, k} E_{m, k}.
$$
and therefore
$$
F= C\cup \bigcup_{m, k} Y_{m, k}.
$$
For each $m , k \in \mathbb N$ we have then a continuous surjection $f_{m, k}= f|_{X_{m, k}} \to Y_{m, k}$, and we are going to show that $f_{m, k}$ is uniformly open. Indeed, let $x\in X_{m, k}$ be given, consider $0<\tau < \frac{1}{k}$ and let $y\in B( f(x), \frac{\tau}{m})$. We know that there exists a sequence $(x_j)$ in $E_{m, k}$ converging to $x$. Thus $(f(x_j))$ converges to $f(x)$ and for $j$ large enough we have that $\Vert y-f(x_j)\Vert < \frac{\tau}{m}$. Then
$$
y\in B( f(x_j), \frac{\tau}{m}) \subset f (B(x_j, \tau)),
$$
so there exists some $u_j \in B(x_j, \tau)$ such that $y=f(u_j)$. Taking $j$ large enough we also have that $\Vert x_j -x\Vert < \tau$, and then $\Vert u_j -x \Vert \leq \Vert u_j -x_j \Vert + \Vert x_j -x\Vert< 2 \tau$. In this way $y= f(u_j) \in f (B(x, 2 \tau))$. This shows that
$$
B(f(x), \frac{\tau}{m} ) \subset f(B(x, 2 \tau)).
$$
Now, for each $\varepsilon >0$ we choose $\tau$ with $0< 2\tau < \min \{\varepsilon, \frac {1}{k}\}$  and we take $\delta = \frac{\tau}{m}$. Then we obtain that
$$
B(f(x), \delta) \subset f(B(x, 2 \tau)) \subset f(B(x, \varepsilon)),
$$
for every $x\in X_{m, k}$.

As a consequence of Theorem \ref{main1} we deduce that, for each $m , k$   there exists a subset $Z_{m, k}$ of $X_{m, k}$ such that ${\rm dens} (Z_{m, k}) = {\rm dens} (Y_{m, k})$ and $f(Z_{m, k}) = Y_{m, k}$. We know that every subset of a metric space has density not larger than the whole space. Thus ${\rm dens} (Z_{m, k}) \leq {\rm dens} (F)$ for every $m, k$. If we now choose
$$
G=\overline{[H \cup \bigcup_{m, k} Z_{m, k}]}
$$
to be the closed linear span of $H \cup \bigcup_{m, k} Z_{m, k}$, we have that ${\rm dens} (G) \leq {\rm dens} (F)$ and $f(G)=F$.

\end{proof}

\begin{example} {\rm We point out that there exist smooth surjections defined on non-separable Banach spaces, whose set of critical values is countably infinite. For a simple example, consider the space $E=\ell_{\infty}$ and let $f: \ell_{\infty} \to \mathbb R$ be the mapping defined for each $x=(x_n)\in \ell_{\infty}$ by
$$
f(x)= x_1 + \cos (x_1) + \sum_{n=2}^{\infty} \frac{x_n^3}{2^n}.
$$
It is easy to see that $f$ is a $C^\infty$-smooth surjection and, for every $x=(x_n)$ and  $u=(u_n)$ in $\ell_{\infty}$, we have that
$$
Df(x)(u) = (1-\sin (x_1))\, u_1 + \sum_{n=2}^{\infty} \frac{3 x_n^2}{2^n} \,u_n.
$$
Then $Df(x)=0$ if, and only if, the point $x=(x_n)$ satisfies  $\sin (x_1)=1$ and $x_n=0$ for every $n\geq 2$. We obtain that the set of critical values of $f$ is
$$
CV(f)= \left\{ \frac{\pi}{2} + 2k \pi \, : \, k \in \mathbb Z \right\}.
$$
In order to construct examples of vector-valued surjections, we proceed as follows. First consider a separable quotient $F$ of $\ell_{\infty}$. This means that $F$ is a separable Banach space and there exists a continuous linear surjection  $T: \ell_{\infty} \to F$. Of course $F$ can be chosen to be $F=\mathbb R^n$, but also  we can choose $F$ to be $F= \ell_2$ (see for example the Remarks
on page 111 of \cite{LT}). By composing with a linear isomorphism of $\ell_{\infty}$ if necessary, we may also assume that $T(e)=0$, where  $e= (1, 1, 1, \cdots) \in \ell_{\infty}$. Now define $g: \ell_{\infty} \to \mathbb R \times F$  by
$$
g(x) = (f(x), T(x)).
$$
Let us see that $g$ is surjective. Given $(\lambda, w)\in  \mathbb R \times F$, since $T$ is surjective there exists some $v=(v_n) \in \ell_{\infty}$ such that $T(v)=w$. Note that, for every $t \in \mathbb R$, we have that $T(v + t e)=w$ and
$$
f(v + t e)= v_1+ t + \cos(v_1+t) + \sum_{n=2}^{\infty} \frac{(v_n+t)^3}{2^n}.
$$
It is easily seen that the function $\phi: \mathbb R \to \mathbb R$ given by $\phi (t)=f(v + t e)$ is surjective, and therefore there exists some $t \in \mathbb R$ such that $g(v + t e)= (\lambda, w)$.
On the other hand, it is clear that $g$ is  $C^\infty$-smooth and, for each $x,u \in \ell_{\infty}:$
$$
Dg(x)(u) = (Df(x)(u), T(u)).
$$
Now we are going to check that, if $Df(x)\neq 0$, then $Dg(x):\ell_{\infty} \to \mathbb R \times F$ is surjective. Indeed, given $(\lambda, w)\in  \mathbb R \times F$, choose $v\in \ell_{\infty}$ such that $T(v)=w$. Since $Df(x)\neq 0$ we have that $Df(x)(e)\neq 0$, and therefore the function $\psi: \mathbb R \to \mathbb R$ given by $\psi (t)=Df(x)(v + t e)$ is surjective. Then there exists some $t \in \mathbb R$ such that $Dg(x)(v + t e)= (\lambda, w)$. Thus we obtain that $x$ is a critical point of $g$ if, and only if, $Df(x)=0$.  The set of critical values of $g$ is then
$$
CV(g)= \left\{ \left(\frac{\pi}{2} + 2k \pi, T(a_k) \right) \, : \, k \in \mathbb Z \right\},
$$
where $a_k= (\frac{\pi}{2} + 2k \pi, 0, 0, \cdots)\in \ell_{\infty}$, for each $k\in \mathbb Z$. Finally note that, as a consequence of Corollary \ref{exceptional}, the mapping $g$ is density-surjective.}

\end{example}
\qed

\

Of course, for a regular surjection the set of critical values is empty, and we obtain the following:

\begin{corollary}\label{regular}
Every  regular, $C^1$-smooth surjection between Banach spaces is density-surjective.
\end{corollary}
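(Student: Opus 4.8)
The plan is to deduce this immediately from Corollary~\ref{exceptional}. The key observation is that the regularity hypothesis is, by definition, precisely the assertion that every point $x_0 \in E$ is a regular point of $f$, i.e.\ that the differential $Df(x_0): E \to F$ is surjective for all $x_0$. Hence $f$ has no critical points at all, and since the set of critical values is by definition the image under $f$ of the set of critical points, this set is empty.

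The empty set has cardinality $0$, which is trivially $\leq {\rm dens}(F)$ (note that ${\rm dens}(F) \geq 1$, since $F$ is a nonempty Banach space). Thus the cardinality hypothesis of Corollary~\ref{exceptional} is satisfied, and we conclude at once that $f$ is density-surjective.

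I expect no genuine obstacle here: this is simply the specialization of the preceding corollary to the extreme case in which the set of critical values is not merely countable but empty. If one preferred to argue without explicitly invoking Corollary~\ref{exceptional}, one could unwind its proof in this setting: applying Graves' Theorem (Theorem~\ref{Graves}) at every point shows that a regular $C^1$-smooth surjection satisfies $E = \bigcup_{m,k} E_{m,k}$, so that $F = \bigcup_{m,k} Y_{m,k}$ with each restriction $f|_{X_{m,k}}$ uniformly open; one then applies Theorem~\ref{main1} to each piece and takes the closed linear span of the resulting subspaces $Z_{m,k}$, whose densities are each $\leq {\rm dens}(F)$. The only point worth checking is the harmless cardinality bookkeeping, namely that a countable union of subspaces each of density at most ${\rm dens}(F)$ again has closed linear span of density at most ${\rm dens}(F)$, which holds because ${\rm dens}(F)$ is infinite in the nontrivial case.
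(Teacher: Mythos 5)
Your proof is correct and matches the paper's own argument exactly: the paper likewise deduces the corollary immediately from Corollary~\ref{exceptional}, observing that a regular surjection has an empty set of critical values. The extra unwinding via Graves' Theorem and Theorem~\ref{main1} is sound but unnecessary.
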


A relevant class of regular surjections are the so-called strong submersions, considered by Rabier in \cite{Ra} (see also \cite{GJ}) in the more general context of Banach-Finsler manifolds.

\

A $C^1$-smooth mapping  $f: E \to F$ between Banach spaces is said to be a {\it strong submersion} if there is no sequence $(x_n)$ in $E$ such that $(f(x_n))$ is convergent in $F$ and $\lim_n \nu(Df(x_n))=0$. Here $\nu(T)$ is defined for every continuous linear operator $T:E \to F$ as
$$
\nu(T)=\inf \{\Vert T^* (y^*) \Vert \, : \, y^* \in F^*, \Vert y^* \Vert = 1 \},
$$
where $T^* :F^* \to E^*$ is the adjoint operator of $T$. Note that this implies in particular that $Df(x)$ is surjective, for every $x\in E$.

\

On the other hand, also following \cite{Ra} (see Definition 3.1), a $C^1$-smooth mapping  $f: E \to F$ between Banach spaces is said to have {\it uniformly split kernels} if there is a constant $C\geq 1$ such that,  for every $x\in E$, there exists a continuous linear projection $P_x : E \to ker Df(x)$ with $\Vert P_x\Vert \leq C$. Note that this condition is automatically satisfied if either $E$ is a Hilbert space or $F$ is finite-dimensional.

\

For strong submersions with uniformly split kernels a global implicit function theorem is obtained in \cite{Ra}, which in particular provides the existence of a continuous section in this case. More precisely, we have the following result.
\begin{theorem} {\rm (Theorem 5.2 in \cite{Ra})}
Let $f: E \to F$ be a $C^1$-smooth mapping between Banach spaces, with locally Lipschitz derivative. Suppose that $f$ is a strong submersion with uniformly split kernels. Then:
\begin{enumerate}
\item These exist a closed subset $W$ of $E$ (in fact, a closed $C^1$-submanifold), and a homeomorphism $\theta : W\times F \to E$ such that
$f(\theta (w, y))=y$ for every $(w, y) \in W\times F$.
\item There exists a continuous section $\varphi : F\to E$ of $f$, and therefore $f$ is density-surjective.
\end{enumerate}
\end{theorem}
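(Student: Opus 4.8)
\medskip
The plan is to trivialize $f$ by \emph{horizontal transport} built from a controlled right inverse of $Df$. At each $x\in E$ the uniform splitting provides a projection $P_x:E\to\ker Df(x)$ with $\|P_x\|\le C$, so $N_x:=\ker P_x$ is a complement of $\ker Df(x)$ on which $Df(x)$ restricts to an isomorphism onto $F$; its inverse $R(x):F\to E$ is a bounded right inverse, $Df(x)R(x)=\mathrm{id}_F$. Since $\nu(Df(x))$ equals the surjection modulus of $Df(x)$, every $y\in F$ has a preimage $v$ with $\|v\|\le\|y\|/\nu(Df(x))$, and applying $I-P_x$ gives the key estimate $\|R(x)\|\le(1+C)/\nu(Df(x))$. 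The use of a right inverse is that if $\gamma:[0,1]\to F$ is $C^1$ and $x(\cdot)$ solves the lifting equation $\dot x(t)=R(x(t))\,\dot\gamma(t)$, then $\frac{d}{dt}f(x(t))=Df(x(t))R(x(t))\dot\gamma(t)=\dot\gamma(t)$, so $f(x(t))=\gamma(t)$ once $f(x(0))=\gamma(0)$: the solutions are precisely the $f$-horizontal lifts of $\gamma$.

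To run ODE theory I would first replace $R$ by a locally Lipschitz field. The data $P_x$ need not vary continuously, but near a fixed $x_0$ the complement $N_{x_0}$ stays transverse to $\ker Df(x)$, and since $Df$ is locally Lipschitz the fixed-complement inverse $x\mapsto(Df(x)|_{N_{x_0}})^{-1}$ is locally Lipschitz by smoothness of operator inversion; using $\|P_x\|\le C$ one checks these local inverses still obey $\|\cdot\|\le c/\nu(Df(x))$ for a constant $c=c(C)$. Patching them yields a locally Lipschitz lifting field, so the lift of a given $\gamma$ through a given initial point exists, is unique, and depends continuously on the initial point and on $\gamma$ over its maximal interval of existence.

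The crucial step, which I expect to be the main obstacle, is that the lift of the segment $t\mapsto ty$ is defined on all of $[0,1]$; this is exactly where strong submersion is indispensable. On a maximal interval $[0,b)$ the image $f(x(t))=ty$ lies in the compact segment $\{ty:0\le t\le b\}$. If $\nu(Df(x(t_n)))\to0$ along some $t_n\to b$, then $(x(t_n))$ is a sequence with $f(x(t_n))$ convergent and $\nu(Df(x(t_n)))\to0$, contradicting the definition of strong submersion; hence $\nu(Df(x(t)))\ge\nu_0>0$ on $[0,b)$ and $\|\dot x(t)\|\le c\|y\|/\nu_0$ is bounded. Then $x(t)$ is Lipschitz in $t$, has a limit as $t\to b$, and is continued past $b$ by local existence, forcing $b=1$: transport along every segment emanating from the base value is complete.

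With complete transport I would fix the base value $0$ (translating $F$ if needed) and set $W=f^{-1}(0)$; as every point is regular and $0$ is a regular value, $W$ is a closed $C^1$-submanifold by the implicit function theorem. Define $\theta(w,y)$ to be the value at $t=1$ of the lift of $t\mapsto ty$ starting at $w\in W$, so that $f(\theta(w,y))=y$. Its inverse sends $x$ to $(w,f(x))$, where $w$ is the endpoint of the lift of $t\mapsto(1-t)f(x)$ starting at $x$; uniqueness of lifts makes the two maps mutually inverse, and continuous dependence on data makes both continuous, giving the homeomorphism of (1). For (2), fixing any $w_0\in W$ and putting $\varphi(y)=\theta(w_0,y)$ produces a continuous section of $f$. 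Density-surjectivity then follows as in the proof of Corollary \ref{exceptional}: the closed linear span $G$ of $\varphi(F)$ has ${\rm dens}(G)\le{\rm dens}(F)$, while $f(\varphi(F))=F$ forces $f(G)=F$, whence ${\rm dens}(G)={\rm dens}(F)$.
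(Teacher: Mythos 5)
Your proposal is correct in substance, but note the asymmetry with the paper: the paper's entire proof of part (1) is the citation of Theorem 5.2 of \cite{Ra}, and only part (2) is actually argued there (fix $w\in W$, set $\varphi(y)=\theta(w,y)$, take the closed linear span of $\varphi(F)$) --- your final paragraph reproduces this verbatim. What you do differently is to open the black box and reprove Rabier's theorem in the Banach-space case, and your route is essentially Rabier's own Ehresmann-type argument: a locally Lipschitz right-inverse field with $\|R(x)\|\le c(C)/\nu(Df(x))$, horizontal lifting of segments, completeness of maximal lifts forced by the strong submersion condition, and time-reversal uniqueness to invert $\theta$; each hypothesis enters exactly where it does in \cite{Ra} (locally Lipschitz derivative for the flow, uniformly split kernels for the bound on $R$, strong submersion for completeness). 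Two compressed steps deserve explicit justification: (i) your ``patching'' works because the set of right inverses of a fixed surjection is convex, so a locally Lipschitz partition of unity (which every metric space admits) applied to the fixed-complement local inverses yields a global locally Lipschitz right-inverse field with the same bound; (ii) the surjection modulus only yields preimages with $\|v\|\le(1+\varepsilon)\|y\|/\nu(Df(x))$, but since $R(x)y=(I-P_x)v$ is independent of the choice of $v$, the estimate $\|R(x)\|\le(1+C)/\nu(Df(x))$ survives letting $\varepsilon\to 0$; and continuity of $\theta$ and $\theta^{-1}$ rests on continuous dependence of Banach-space flows on initial data and parameters, which you invoke but do not prove. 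The trade-off: the paper's citation is shorter and inherits the greater generality of \cite{Ra} (Finsler manifolds, the full fibration structure), while your version makes part (1) self-contained at the cost of these routine-but-nontrivial details; your deduction of part (2) and of density-surjectivity coincides with the paper's and with the argument of Corollary \ref{exceptional}.
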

\begin{proof}
Part $(1)$ is contained in Theorem 5.2 of \cite{Ra}. In order to obtain $(2)$, fix a point $w\in W$ and define $\varphi (y)= \theta (w, y)$. Then  $\varphi: F \to E$ is continuous and  $f(\varphi (y))= y$ for every $y\in F$. Now  if we consider $G=\overline{[\varphi (F)]}$ the closed linear subspace of $E$ spanned by $\varphi (F)$, we have that $G$ has the same density character as $F$ and $f|_G$ is surjective.
\end{proof}

We have obtained in Corollary \ref{exceptional} a positive  density-surjection result when the mapping $f$ is a smooth surjection with a countable number of critical values. So we may further ask about the size of the set of critical values, especially in the case of mappings taking values in Euclidean spaces. In particular, we may wonder if we could have a positive answer to our problem when the conditions of the Morse-Sard theorem are fulfilled. More precisely, suppose that $f: E \to \mathbb R^n$ is a $C^{\infty}$-smooth surjection from a non-separable Banach space $E$, such that the set of critical values of $f$ has zero-measure in $\mathbb R^n$. Is there a separable subspace $G$ of $E$  such that the restriction $f|_G$ remains surjective? The following example provides a negative answer to this question (cf. Corollary 7 of \cite{AJR}).

\begin{theorem}\label{example}
Let  $\Gamma$ be a set with ${\rm card}(\Gamma) = 2^{\aleph_0}$. There exists a $C^{\infty}$-smooth surjection $f: \ell_2(\Gamma) \to \mathbb R^2$ such that:
\begin{enumerate}
\item For every separable subspace $G$ of $\ell_2(\Gamma)$, the restriction $f|_G$  is no longer surjective.
\item The set of critical values of $f$ has zero-measure in $\mathbb R^2$.
\end{enumerate}
\end{theorem}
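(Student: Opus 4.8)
The plan is to reduce the problem to a statement about countable subsets of $\Gamma$ and then to build $f$ by adapting the construction of \cite{AJR}, replacing its rank-one mechanism by a \emph{fold} mechanism that is generically a submersion. First I would record the reduction: any separable subspace $G\subseteq \ell_2(\Gamma)$ is contained in $V_S:=\overline{\mathrm{span}}\{e_\gamma:\gamma\in S\}$ for some countable $S\subseteq\Gamma$ (take $S$ to be the union of the supports of a countable dense subset of $G$), so $f(G)\subseteq f(V_S)$ and it suffices to arrange that $f(V_S)\neq\mathbb{R}^2$ for every countable $S$. The cardinal bookkeeping is favourable because $\mathrm{card}(\Gamma)=2^{\aleph_0}$ equals the number $(2^{\aleph_0})^{\aleph_0}$ of countable subsets of $\Gamma$; thus I can enumerate the countable subsets as $\{S_\alpha:\alpha<\kappa\}$ with $\kappa=2^{\aleph_0}$ and, by transfinite recursion, attach to each $S_\alpha$ a coordinate $\gamma_\alpha\in\Gamma\setminus S_\alpha$ and a target $p_\alpha\in\mathbb{R}^2$, with all the $\gamma_\alpha$ distinct (at stage $\alpha$ the forbidden set $S_\alpha\cup\{\gamma_\beta:\beta<\alpha\}$ has cardinality $\le\aleph_0+|\alpha|<\kappa$, so a fresh $\gamma_\alpha$ is available).

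Before constructing $f$ I would pin down the shape it must have, using the positive results as constraints. By Corollary \ref{regular} a \emph{regular} surjection is always density-surjective, so $f$ must have critical points; and by Corollary \ref{exceptional} its critical values cannot be countable, so the critical set must be uncountable while, by (2), its image must still have measure zero. A pure submersion is moreover excluded by a Lindel\"of argument: if every restriction $f|_{V_S}$ were a submersion, then by Graves' theorem (Theorem \ref{Graves}) each would be an open map, so each $f(V_S)$ would be open; as $\mathbb{R}^2$ is Lindel\"of, the open cover $\{f(V_S)\}_S$ would admit a countable subcover $\{f(V_{S_n})\}_n$, and then $S'=\bigcup_n S_n$ would be a countable set with $f(V_{S'})=\mathbb{R}^2$, contradicting (1). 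Hence the failure of surjectivity on each $V_S$ must be routed through a nonempty critical locus. The natural local model is the Whitney fold $(u,v)\mapsto(u,v^2)$, which is a submersion off the line $\{v=0\}$, sends that line (its critical set) into a single line (measure zero), and has as image only a closed half-plane, a proper connected set bounded by its critical values.

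The construction I would then carry out assembles $f$ from smooth, localized fold pieces indexed by $\Gamma$, following the pattern of \cite{AJR} but thickening each one-dimensional sheet, by means of an auxiliary coordinate, into a two-dimensional fold. The pieces are to be arranged so that: (i) $f$ is globally $C^\infty$ and surjective; (ii) for $S=S_\alpha$ the reserved point $p_\alpha$ lies beyond the fold barrier of every piece supported in $S$ and can be reached only by activating the coordinate $\gamma_\alpha\notin S$, so that $p_\alpha\notin f(V_S)$; and (iii) the critical locus of $f$ is exactly the union of the fold loci of the pieces, each of which is mapped by $f$ into one fixed smooth curve. Since a countable union of $C^1$ curves has measure zero in $\mathbb{R}^2$, item (iii) yields (2), and verifying it ultimately reduces to a finite-dimensional Morse--Sard computation for the individual fold pieces.

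The main obstacle is step (ii). The dependence of $f$ on the coordinates is necessarily \emph{coupled}: a naive additive model $\sum_\gamma \Phi(x_\gamma,\gamma)$ makes the image $f(V_S)$ of the whole subspace far too large, and the Lindel\"of obstruction above shows that the coupling cannot conceal any countable submersive core. I must therefore control the global image of an entire infinite-dimensional subspace $V_S$, not merely the contributions of single coordinates, while simultaneously keeping every fold locus mapping into the prescribed measure-zero curve. Reconciling genuine global surjectivity with the non-surjectivity of every separable restriction, and confining the critical \emph{values} (not just the critical points) to a set of measure zero, is the heart of the argument.
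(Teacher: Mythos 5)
Your reduction (separable $G\subseteq V_S$ for countable $S$, so it suffices that $f(V_S)\neq\mathbb{R}^2$), your cardinal bookkeeping, and your necessary-conditions analysis (critical points must exist by Corollary \ref{regular}, critical values must be uncountable by Corollary \ref{exceptional} yet null, no submersive core by the Lindel\"of argument) are all sound, and your architecture of localized pieces indexed by $\Gamma$ with disjoint supports near the $e_\gamma$ matches the paper's. But the proposal stops exactly where the proof has to begin: you yourself flag step (ii) --- making each reserved target unreachable from $V_{S_\alpha}$ while keeping $f$ globally onto with null critical values --- as ``the heart of the argument,'' and you supply no mechanism for it. This is a genuine gap, not a presentational one, and your transfinite diagonalization over the $2^{\aleph_0}$ countable subsets $S_\alpha$ gives no handle on it: nothing in the recursion controls the image of the ``generic'' part of $f$ on $V_{S_\alpha}$, which is precisely the coupling problem you identify in your last paragraph.

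The paper's missing idea is to use the Cantor set $C$ as a single closed, null, continuum-sized set playing three roles at once, which makes your diagonalization unnecessary. (i) Partition $\Gamma=\bigcup_k\Gamma_k$ with bijections $\sigma_k:\Gamma_k\to C$, and let the bump piece at $e_\gamma$ have image exactly the vertical segment $\{\sigma_k(\gamma)\}\times[k-\tfrac12,k+\tfrac12]$; on these pieces $f$ has rank $\le 1$, so all of $C\times\mathbb{R}$ may consist of critical values --- uncountably many vertical lines, null only because $C$ is null (note this is incompatible with your plan of confining critical values to countably many curves via folds, if the reserved targets are to be critical values as they are here). (ii) The complement is covered by $f=g\circ\pi$ on tubes, where $\pi:\mathbb{R}^2\times\ell_2(\Gamma)\to\mathbb{R}^2$ and $g:\mathbb{R}^2\to\mathbb{R}^2$ is an explicit smooth map with image exactly $(\mathbb{R}^2\setminus(C\times\mathbb{R}))\cup(\mathbb{R}\times\{0\})$; since $g$ is finite-dimensional, the classical Morse--Sard theorem handles its critical values, so the whole critical value set lies in $N\cup(C\times\mathbb{R})\cup(\mathbb{R}\times\{0\})$. (iii) Because the generic part avoids $C\times(\mathbb{R}\setminus\{0\})$, any set $X$ with $f(X)\supseteq C\times\{1\}$ must meet the disjoint open sets $V_\gamma$ for continuum-many $\gamma$, hence is nonseparable --- a conclusion stronger than yours (it applies to arbitrary subsets, not just subspaces) and obtained with a one-line cardinality argument instead of transfinite recursion. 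Constructing a smooth $g$ whose image is exactly the complement of the Cantor strip (via the auxiliary maps $g_{(a,b,c,m)}$ and a countable decomposition of that complement) is the concrete content your proposal would still need to invent.
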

\begin{proof}
 For our construction, we will need some auxiliary mappings on the plane. Fix $0<a<b<c$ and $m\in \mathbb N$, and
define a mapping
$$
 g_{(a, b, c, m)} : \mathbb R^2 \to \mathbb R^2
$$
as follows.  {First choose a $C^\infty$-smooth function $\varphi_m: \mathbb R^2 \to \mathbb R$ such that $0\leq \varphi_m \leq 1$,  $\varphi_m = 1$ identically on the unit square $[0, 1]\times [0,1]$ and the support of $\varphi_m$ is contained in the square  $[-r_m, r_m]\times [-r_m, r_m]$, where $r_m=\sqrt{\frac{m+1}{m}}$.}

 {Next, consider a $C^\infty$-smooth function $\theta : \mathbb R \to \mathbb R$ with $0\leq \theta \leq 1$, such that
$\theta (t) =0$ for $t\leq a$ and $\theta (t)=1$ for $t \geq b$. Then define
$$
g_{(a, b, c, m)}(x, y) = (c x^2 \, \varphi_m(x, y), \, m y^2 \, \theta (c x^2 \varphi_m (x, y)) \, \varphi_m (x, y)).
$$
We then have that $g_{(a, b, c, m)}$ is  $C^\infty$-smooth with compact support contained in the Euclidean ball  $B((0, 0), 2)$, and its image $E(a, b, c, m) = g_{(a, b, c, m)}(\mathbb R^2)$ satisfies that
$$
[b, c]\times [0, m] \subset E(a, b, c, m) \subset ([0, a] \times \{0\}) \cup ([a, \frac{m+1}{m}c] \times [0, m+1]).
$$
Indeed, for the first containment, if $(x, y)$ belongs to $[\sqrt{\frac{b}{c}}, 1] \times [0, 1]$ then $\varphi_m (x, y)=1$ and $\theta (c x^2 \varphi_m (x, y))=1$, so that $g_{(a, b, c, m)}(x, y)= (c x^2, m y^2)$. Concerning the second containment, note  that $g_{(a, b, c, m)}(x, y)$ is always contained in $[0, \frac{m+1}{m}c]\times [0, m+1]$. Now, if the first coordinate $c x^2 \, \varphi_m(x, y)$ belongs to the interval $[0, a]$ then $\theta$ vanishes on it and the second coordinate is zero.}

  {Furthermore, note that an analogous mapping  $g_{(a, b, c, m)}$ can be constructed for $m\in \mathbb Z\setminus \{0\}$ and also for $0>a>b>c$.}

\

Now let $C$ be the usual Cantor set in $\mathbb R$ and consider the set $C\times \mathbb R$. It is not difficult to see that the set
$$
A=(\mathbb R^2 \setminus (C\times \mathbb R)) \cup (\mathbb R \times \{0\})
$$
can be written as a countable union of sets of the above form:
$$
A= \bigcup_{j\in \mathbb N} E(a_j, b_j, c_j, m_j)
$$
for some $a_j, b_j, c_j \in \mathbb Q$ and some $m_j \in \mathbb Z$. Now for each $j \in \mathbb N$, using translations and homotheties we can easily modify the corresponding mapping  $g_{(a_j, b_j, c_j, m_j)}$ in order to obtain a mapping $\tilde{g}_j$ with support contained in the Euclidean ball $B((j, 0), \frac{1}{4})$ and such that  $\tilde{g}_j(\mathbb R^2)= E(a_j, b_j, c_j, m_j)$. We now glue together these mappings and define
$$
g : \mathbb R^2 \to \mathbb R^2
$$
by setting $g(x, y) = \tilde{g}_j (x, y)$ if $(x, y)\in B((j, 0), \frac{1}{4})$ and $g(x, y)=0$ otherwise. We obtain that  $g$ is
$C^\infty$-smooth and $g( \mathbb R^2) = A$. Furthermore, by the Morse-Sard theorem, we know that the set of critical values of $g$ has zero-measure in $\mathbb R^2$.

\

Consider now the Hilbert space $H=\mathbb R^2 \times \ell_2(\Gamma)$, which is in fact isomorphic to $\ell_2(\Gamma)$. Denote by $(e_\gamma)_{\gamma \in \Gamma}$ the usual orthonormal basis of  $\ell_2(\Gamma)$. The next part of the construction will be similar to  the proof of Theorem 4 in \cite{AJR}. For each $\gamma \in \Gamma$, we consider the open set $W_{\gamma} \subset H$ given by the product of open balls $W_{\gamma}= B((0, 0), \frac{1}{4}) \times B(e_{\gamma}, \frac{1}{4})$ and, by using the smoothness properties of $\ell_2(\Gamma)$ (see e. g. \cite{DGZ}) we can find a function $\phi_{\gamma}\in C^{\infty}(H)$ with support contained in $W_{\gamma}$, such that $0 \leq \phi_{\gamma} \leq 1$ and
$\phi_{\gamma} (e_{\gamma})=1$. Let $\eta:\mathbb R \to \mathbb R$ be a $C^{\infty}$-smooth function with
$0\leq \eta\leq 1$, $\eta (t)=0$ for every $t\leq\frac{1}{6}$
and $\eta(t)=1$ for every $t\geq\frac{1}{5}$. Now if we define
$g_{\gamma}=\eta \circ \phi_{\gamma}$, we obtain that
$g_{\gamma}\in C^{\infty}(H)$, the support of $g_{\gamma}$ is
contained in $W_{\gamma}$, and $g_\gamma = 1$ on the nonempty open subset
$V_{\gamma} = (\phi_{\gamma})^{-1}(\frac{1}{5}, \infty)$ of $W_{\gamma}$. Next,  for each
$k \in \mathbb Z$ we choose a $C^{\infty}$-smooth function $\theta_k:\mathbb R \to \mathbb R$ such that
$\theta_{k}(t)=0$ for every $t\leq\frac{1}{4}$ and
$\theta_{k}([\frac{1}{3}, \frac{1}{2}])=[k-\frac{1}{2},
k+\frac{1}{2}]$, and  we define $h_{\gamma, k}=\theta_{k}\circ \phi_{\gamma}$. Then $h_{\gamma, k} \in C^{\infty}(H)$
has support contained in $V_{\gamma}$ and the image $h_{\gamma, k}
(V_{\gamma})$ contains the interval $[k-\frac{1}{2},
k+\frac{1}{2}]$.

Consider a partition $\Gamma = \cup_{k\in \mathbb Z} \Gamma_k$, where for each
$k\in \mathbb Z$ the set $\Gamma_k$ has cardinality $2^{\aleph_0}$,
and for  each $k$ choose a bijection with the Cantor set $\sigma_k: \Gamma_k \to C$.

For each $\gamma \in \Gamma,$ choose the unique $k$ such that $\gamma \in \Gamma_k,$ and define
$f_\gamma:H \to \mathbb R^2$ by setting
$$
f_\gamma(x)=( g_\gamma(x) \cdot \sigma_k (\gamma), \,  h_{\gamma, k}(x)),
$$
if $x$ belongs  $W_{\gamma}$; and $f_\gamma(x)=(0,0)$ otherwise.
Then each $f_\gamma$ is $C^{\infty}$-smooth on $H$,  with support contained into
$W_{\gamma}$. Note that since $\gamma \in \Gamma_k$ we have that
$f_{\gamma}|_{V_{\gamma}}(x)= (\sigma_k(\gamma), h_{\gamma, k}(x))$, and
therefore the image $f_{\gamma}(V_{\gamma})$ contains the set
$\{\sigma_k(\gamma)\}\times [k-\frac{1}{2}, k+\frac{1}{2}]$.

\

Next, for each $j\in \mathbb N$ consider the set $U_j= \pi^{-1}(B((j, 0), \frac{1}{4}))$, where $\pi : H=\mathbb R^2 \times \ell_2(\Gamma) \to \mathbb R^2$ is the natural projection. Since the family  $\{W_{\gamma} \}_{\gamma \in \Gamma} \cup \{U_j\}_{j \in \mathbb N}$ is pairwise disjoint and locally finite in $H$, we can define a $C^\infty$-smooth mapping $f:H
\to \mathbb R^2 \times \mathbb R$ by setting
$$
f(x)= f_{\gamma}(x), \text{  if  } x\in W_{\gamma} \text{  for some  } \gamma \in \Gamma;
$$
$$
f(x)= g(\pi (x)), \text{  if  } x\in U_j \text{  for some  } j \in \mathbb N;
$$
$$
f(x)=(0,0), \text{   otherwise.}
$$
By the construction of the functions $f_{\gamma}$, we have that $f(\cup_{\gamma \in \Gamma} W_{\gamma})= (C\times \mathbb R) \cup (\mathbb R \times \{0\})$.
Now, from the construction of  $g$, it is clear that also the complement $\mathbb R^2 \setminus (C\times \mathbb R)$ is contained in the image of $f$. Therefore, $f$ is a surjection.

On the other hand, suppose that $X$ is a subset of $H$ such that
$f|_X:X\to C \times \mathbb R$ is onto. Then for each $u \in C$
there exists some $x_u\in H$ such that $f(x_u)=(u,1)$. By the
construction, since the second coordinate of $f(x_u)$ is nonzero,
there exists some $\gamma \in \Gamma$ such that $x_u\in V_{\gamma}$.
Then $\gamma \in \Gamma_k$ for some $k$ and by the definition of
$f_\gamma$ we have that $v=\sigma_k(\gamma)$. Therefore
$$
C =\bigcup_{k \in \mathbb Z} \sigma_k(\{\gamma \in \Gamma_k \,: \, X \cap V_{\gamma}  \neq \emptyset \}).
$$
From a
cardinality argument we obtain that there exists some $k\in \mathbb
Z$ such that
$$
\text{ card } \{\gamma \in \Gamma_k \, : \, X \cap V_{\gamma} \neq \emptyset\}
$$
is uncountable. Thus $X$ contains an  uncountable family of non-empty, disjoint, open sets, so  $X$ cannot be separable.

\

Finally, let $N$ denote the set of critical values of $g$. We are going to check that the set of critical values of $f$  is contained in $N \cup (C\times \mathbb R) \cup (\mathbb R \times \{0\})$ and therefore has zero-measure in $\mathbb R^2$. Indeed,  consider a point
$(u, v)\in \mathbb R^2 \setminus (N \cup (C\times \mathbb R) \cup (\mathbb R \times \{0\}))$. For every $x \in H$ with $f(x)= (u, v)$ there exists some $j\in \mathbb N$ such that $x\in U_j$ and $f= g\circ \pi$ on a neighborhood of $x$.  Furthermore, $g$ is regular at $\pi (x)$. Thus $f$ is regular at $x$.

\end{proof}

\

To conclude, we remark that an adaptation of the above argument yields a $C^{\infty}$-smooth surjection $f: \ell_2(\Gamma) \to \mathbb R^2$ such that to every point $y \in \mathbb R^2$
there corresponds $x \in \ell_2(\Gamma)$ such that $f(x) = y$ and also $Df(x) = 0.$

\end{section}

\bigskip

\centerline {\sc Acknowledgements:}
\vspace{.2cm}

Different parts of this paper were developed during a visit of Enrico Le Donne to ICMAT (Madrid) and a visit of Jes\'us A. Jaramillo to Kent State University. We would like to  thank these institutions  for their kind hospitality and support.  It is also a pleasure to thank Patrick Rabier for several valuable comments concerning this paper.

\bigskip

  {\bf Post Scriptum:} The recent work \cite{JLR} contains a proof of a more general
statement than Theorem \ref{main1}.

%%%%%%%%%%%%%%%%%%%%%%

\end{document}